\title{Thorn forking,  weak normality, and theories with selectors}
\date{\today}
\author
{Daniel Max Hoffmann\thanks{SDG. Supported by the Polish National Agency for Academic Exchange}\\University of Notre Dame\\ University of Warsaw    \and Anand Pillay\thanks{Supported by NSF grants  DMS-1665035, DMS-1760212 and DMS-2054271, as well as by a Nelder Visiting Fellowship at Imperial College, London, where final corrections were done. }\\University of Notre Dame }
\newtheorem{Theorem}{Theorem}[section]
\newtheorem{Proposition}[Theorem]{Proposition}
\newtheorem{Remark}[Theorem]{Remark}
\newtheorem{Lemma}[Theorem]{Lemma}
\newtheorem{Corollary}[Theorem]{Corollary}
\newtheorem{Question}[Theorem]{Question}
\DeclareMathOperator{\aut}{Aut}
\begin{document}
\maketitle

\begin{abstract}  We discuss the role of weakly normal formulas in the theory of thorn forking, as part of a commentary on the paper \cite{Ealy-Onshuus}.  We also give a  counterexample to Corollary 4.2 from that paper, and in the process discuss ``theories with selectors". 
\end{abstract}

\section{Introduction}

This work comes out of our reading of the paper \cite{Ealy-Onshuus}. A superficial look  at \cite{Ealy-Onshuus} may give the impression (as it gave to the authors) that the results in that paper shed some light on the stable forking conjecture. 
We point out that it is a stronger version of stability, {\em weak normality}, which is relevant, more or less by definition, although there is still no real connection with the stable forking conjecture. We also point out a counterexample to Corollary 4.2 of  \cite{Ealy-Onshuus} which claims that strong dividing of $\phi(x,c)$ can be witnessed by parameters from $dcl(c)$.

We will work as usual in a saturated model ${\bar M} = {\bar M}^{eq}$ of a complete first order theory $T$ in language $L$. (Namely we  assume that $T = T^{eq}$.)
We could mean by this a $\kappa$-saturated and strongly $\kappa$-homogeneous model (for some large $\kappa$). On the other hand a recent note \cite{Halevi-Kaplan} makes precise the justification for the widespread practice of assuming the existence of saturated
models in the strict sense of $\kappa$-saturated and of cardinality $\kappa$ (for arbitrarily large $\kappa$).  So we feel free to assume ${\bar M}$ to be saturated in the strict sense.  This is relevant to Section 4, where we prove completeness of a theory $T^{+}$. However in all cases, back and forth arguments inside saturated models in the weaker sense will be enough, as we will mention.

 Our basic model-theoretic notation  is as in \cite{Pillay-book}.   $x,y, z,\ldots$ denote (finite tuples of) variables, and $a,b,c,\ldots$ (finite tuples of) elements of the ambient model. Definability means with parameters. Remember that a definable set $X$  is said to be ``almost over $A$" iff $X$ has finitely many images under $Aut({\bar M}/A)$  iff the canonical parameter of $X$ is in $acl(A)$.  We typically identify a set $A$ of parameters with its definable closure.
Following a request from the referee, let us be more precise about canonical parameters of definable sets. Given a definable set $X$, let $\phi(x,y)$ be an $L$-formula and $c$ a parameter such that $\phi(x,c)$ defines $X$. Relative to this choice of $\phi(x,y)$ we obtain a cano\-nical parameter for $X$: let $E_{\phi}(y_{1}, y_{2})$ be the $\emptyset$-definable equivalence relation $\forall x (\phi(x,y_{1})\leftrightarrow \phi(x,y_{2}))$. Then the equivalence class $c/E_{\phi}$ of $c$ is a canonical parameter for $X$, and note that we can find a formula with parameter $c/E_{\phi}$ which defines $X$. Any other choice of $L$-formula $\phi$ gives another canonical parameter for $X$, but all canonical parameters are interdefinable over $\emptyset$. 

We will be assuming a basic knowledge of notions from stability theory, such as forking, dividing, and canonical bases of stationary types, although we will repeat some of the definitions below.  In addition to \cite{Pillay-book} one could refer to \cite{Casanovas}.

Thorn forking was introduced by Onshuus \cite{Onshuus} as an account of a  general theory of independence, which subsumes nonforking in stable theories, and also in simple theories assuming elimination of hyperimaginaries, but also includes ``dimension-independence" in $o$-minimal theories.  A very nice account and explanation of thorn forking appears in \cite{Adler}. The stable forking conjecture says that stable formulas are responsible for forking in simple theories.  Here by a stable formula we mean an $L$-formula $\phi(x,y)$ where the free variables of $\phi$ are partitioned into $x,y$ and $\phi(x,y)$ does not have the order property.  And to say that stable formulas are responsible for forking means that whenever $p(x)\in S(B)$ forks over $C\subseteq B$,  then there is a stable $L$-formula $\phi(x,y)$ and an instance $\phi(x,b)$ of $\phi(x,y)$ which is in $p(x)$ and such that $\phi(x,b)$ forks over $C$. 

The aim of this paper is to clarify the  content of \cite{Ealy-Onshuus}, so we will first  say what the paper appears to be about (even though there is a typographical error in the main Definition 1.2 of stable dividing with parameters).  We will for the sake of completeness recall the basic notions. A formula $\phi(x,b)$ is said to {\em divide} over $A$ if it is consistent, and there is $k$ and an infinite sequence $(b_{i}:i<\omega)$ of realizations of  $tp(b/A)$  such that $\{\phi(x,b_{i}):i<\omega\}$ is $k$-inconsistent (for every choice of $i_1<\ldots<i_k<\omega$, the set $\{\phi(x,b_{i_{1}}),\ldots,\phi(x,b_{i_{k}})\}$ is inconsistent),  equivalently there is an indiscernible sequence $(b_{i}:i< \omega)$ of realizations of $tp(b/A)$ such that $\{\phi(x,b_{i}): i<\omega\}$ is inconsistent.  $\phi(x,b)$ is said to {\em strongly divide} over $A$, if $\phi(x,b)$ is not almost over $A$ and for some $k$, the set of {\em all} $A$-conjugates of $\phi(x,b)$ is $k$-inconsistent. The formula $\phi(x,b)$ {\em thorn divides} over $A$, if for some tuple $d$ of parameters, $\phi(x,b)$ strongly divides over $(A,d)$.  A complete type $p(x)\in S(B)$ (thorn) divides over $A\subseteq B$ if $p$ contains a formula which (thorn) divides over $A$.  

Finally, one says that a complete type $p(x)\in S(B)$ (thorn) forks over $A$ if $p$ implies a finite disjunction of formulas each of which (thorn) divides over $A$. We freely use the obvious (by compactness) fact that $p(x)\in S(B)$ does not (thorn) fork over $A$ if and only if for any ${\hat B}\supseteq B$, $p(x)$ has an extension to a complete type $q(x)$ over ${\hat B}$ which does not (thorn) divide over $A$.

Theorem 4.1 of \cite{Ealy-Onshuus} says that thorn-forking is unchanged if one {\em redefines}  a formula $\phi(x,b)$ to thorn divide over $A$, if for some tuple $d\in dcl(A,b)$, $\phi(x,b)$ strongly divides over $(A,d)$. Corollay 4.2 of \cite{Ealy-Onshuus} is supposed to follow immediately from Theorem 4.1 and states that if $\phi(x,b)$ thorn divides over $A$ then the additional parameters $d$ witnessing strong dividing can be found in $dcl(A,b)$. We give a counterexample to this Corollary 4.2 which suggests to us that Theorem 4.1 is also false. 

Theorem 3.3 of \cite{Ealy-Onshuus} depends on a notion of stably dividing with parameters (Definition 1.2 in \cite{Ealy-Onshuus}).  After correcting a typographical error, we restate the definition as follows: Let $X$ be a definable set and $A$ a set of parameters. Then $X$ stably divides with parameters (w.p), over $A$, if there is a set $B$ of parameters, such that 
\newline 
(1) For some stable $L_{B}$-formula $\phi(x,y)$ and parameter $c$, $X$ is defined by $\phi(x,c)$, and 
\newline
(2) $X$ divides over $A\cup B$.

One recovers as before a notion ``$p(x)\in S(B)$ stably forks with parameters", over $A$. Theorem 3.3 states that thorn forking coincides with stable forking with parameters. 

As mentioned above, and we point out below in Proposition 3.1,  it is rather a {\em special case} of stable formulas, namely weakly normal formulas, which are relevant, and this is essentially immediate from the definition of thorn forking. 
In any case we also give an independent proof of Theorem 3.3 of \cite{Ealy-Onshuus}. 
 

Before continuing, let us mention some apparent deficiencies in the formulations in \cite{Onshuus}, especially concerning strong dividing, which reappear in \cite{Ealy-Onshuus}. 
The actual definition in \cite{Onshuus} depends on the particular formula $\phi(x,b)$ rather than the set it defines, and says that $\phi(x,b)$ strongly divides over $A$ if 
$b\notin acl(A)$ and 
the set of formulas $\phi(x,b')$, as $b'$ ranges over all distinct realizations of $tp(b/A)$ is $k$-inconsistent, for some $k$.
Let us also recall that $\phi(x,b)$ thorn divides over $A$ if $\phi(x,b)$ strongly divides over $Ad$ for some finite tuple $d$.
So adding dummy parameters to $b$ can preserve the definable set, but change strong dividing (for different reasons).  On the other hand, if we define thorn forking  with this definition of a formula thorn dividing, we DO get the same notion as when we want $b$ to be a canonical parameter of $\phi(x,b)$, so nothing is really lost. 

In any case, in the current paper we use the correct functorial definitions, depending only on the definable set, not the choice of a formula defining it. 

We would like to thank the referee for a very helpful report, pointing out  errors and issues requiring clarification.

\section{Weak normality}
As in Chapter 4 of \cite{Pillay-book} we call a definable (with parameters) set $X$ {\em weakly normal} if $X$ is nonempty, and  for any $a\in X$ there are only finitely many $Aut({\bar M})$ conjugates of $X$ which contain $a$.   When $X$ is already defined over $acl(\emptyset)$ this condition trivially holds.
Consider a canonical parameter $e$ for the set $X_{e}:=X$.
Weak normality is equivalent to saying that for every $e'$ sharing with $e$ its  type over the empty set, and for every $a\in X_{e'}$ (the shift of $X_e$) we have that
$e'\in acl(a)$.
The condition that $X$ is not over $acl(\emptyset)$ means that $e\notin acl(\emptyset)$.

We can relativise weak normality to any (small) set $A$ of parameters:  $X$ is {\em weakly normal relative to $A$} if $X$ is weakly normal in the expansion of ${\bar M}$ by adding constants for elements of $A$.
This corresponds to saying that for every $e'\equiv_A e$ and every $a\in X_{e'}$
it must be that $e'\in acl(A,a)$.
The ``nontriviality condition", over $A$, means that $e\notin acl(A)$. By compactness we obtain the following characterization of weak normality.

\begin{Remark}\label{remark:k.many.weakly.normal}
Assume that $X$ is a definable set.
The definable set  $X$ is weakly normal relative to a set $C$ if and only if
there exists a natural number $k>0$ such that 
for every $k$-many distinct $C$-conjugates of $X$, the intersection of these $C$-conjugates is empty (in particular, if the number of distinct $C$-conjugates of $X$ is finite).
\end{Remark}

Hence the definable set $X$ strongly divides over $C$ iff $X$ is weakly normal relative to $C$ and is not almost over $C$.

\begin{Remark}\label{remark:old.2.1}
Let $X$ be a definable set which is weakly normal relative to a set $C$. Then $X$ divides over $C$ iff 
$X$ is not almost over $C$. 
\end{Remark}
\begin{proof} Clearly dividing over $C$ implies  not being almost over $C$  (for any definable set $X$, weakly normal relative to   $C$ or not).  Conversely suppose that $X$ is weakly normal relative to $X$ and that $X$ is not almost over $C$. 
Then by Remark \ref{remark:k.many.weakly.normal}, the set $X$ $k$-divides over $C$ for some $k$.
\end{proof}

\begin{Remark}\label{remark:old.2.2}
 Suppose we are given a tuple $a$ and sets $C\subseteq B$. Then the following are equivalent:
\newline
(i) There is $b$ in $dcl(B)$ which is in $acl(a,C)\setminus acl(C)$.
\newline
(ii) For some weakly normal relative to $C$, definable set $X$ in $tp(a/B)$, $X$ is not almost over $C$.
\end{Remark}

\begin{proof} Let $b\in dcl(B)$ be as in (i).  
We can then  find $L$-formulas $\phi(x,y,z)$ and $\psi(y,w)$, $c$ in $C$
and $b_0\in B$ such that  $\models\phi(a,b,c)\,\wedge\,\psi(b,b_0)$,
\begin{itemize}
\item 
$\phi(x,y,z)$ implies $(\exists^{=n}y')\,\phi(x,y',z)$ for some $n>0$

\item 
and $\psi(y,w)$ implies $(\exists^{=1}y')\,\psi(y,w)$.
\end{itemize}
Let $X$ be the set defined by $(\exists y)\,(\phi(x,y,c)\,\wedge\,\psi(y,b_0))$.
So $X$ is in $tp(a/B)$. Note that $X=\phi(\bar{M},b,c)$.
\newline
It follows from the properties of $\phi(x,y,z)$ and $\psi(y,w)$ that $X$ is weakly normal relative to $C$. We have to check that $X$ is not almost over $C$.  
Suppose for a contradiction that $X$ is almost over $C$.
As $b\not\in acl(C)$, choose an infinite sequence $f_i\in\aut(\bar{M}/C)$, $i<\omega$, such that all $f_i(b)$'s are distinct. 
Without loss of generality, we have $f_i(X)=f_1(X)\ni f_1(a)$ for infinitely many $i<\omega$ and
so $\models\phi(f_1(a),f_i(b),c)$ for infinitely many $i<\omega$ which contradicts the choice of $\phi$.
The implication from (ii) to (i) follows by definition, when we choose $b$ to be a canonical parameter of $X$.
\end{proof}

\begin{Remark} Let $X$ be a definable set, defined by formula $\phi(x,e)$ where $\phi(x,y)$ is an $L$-formula and $e$ is the canonical parameter of $X$. Let $C$ be a set of parameters.
The following are equivalent:
\newline
(i) $X$ is weakly normal relative to $C$,
\newline
(ii) There is a formula $\psi(y)\in tp(e/C)$ (so with parameters in $C$) such that the formula $\theta(x,y)$:= $\phi(x,y)\wedge \psi(y)$ has the property that for any $a$ there are only finitely many (up to equivalence) instances $\theta(x,e')$ of $\theta(x,y)$ such that $\models \theta(a,e')$ In particular the $L_{C}$-formula $\theta(x,y)$ is stable. 
\end{Remark}
\begin{proof} (ii) implies (i) is immediate, as $C$-conjugates of $X$ are instances of $\theta(x,y)$.
\newline
(i) implies (ii) is a compactness argument.

\end{proof}

Let's note that formulas $\theta(x,y)$ in (ii) above are not only stable but are also {\em equations} in the sense of Srour \cite{Srour}. 


\section{Thorn forking}

Here is our characterization of thorn forking in terms of weakly normal sets relative to parameters.  It is basically tautological.

\begin{Proposition} \label{proposition:thorn.weakly.normal}
Given tuple $a$ and sets $C\subseteq B$, $tp(a/B)$ does not thorn fork over $C$ iff there is some extension of $tp(a/B)$ to a complete type $p(x)$  over some $(|T|+|C|)^{+}$-saturated model $M$, such that whenever $X$ is a definable set in $p(x)$ and $C\subseteq C'\subseteq M$ and $X$ is weakly normal relative to $C'$ then $X$ is almost over $C'$ (equivalently $X$ does not divide over $C'$).
\end{Proposition}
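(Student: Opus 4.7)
The plan rests on a single observation derived from Remark~\ref{remark:k.many.weakly.normal}: for any $C' \supseteq C$, a definable set $X$ strongly divides over $C'$ if and only if $X$ is weakly normal relative to $C'$ and $X$ is not almost over $C'$. Indeed, if $X$ is not almost over $C'$ then its canonical parameter is not in $acl(C')$, forcing $X$ to have infinitely many $C'$-conjugates; so by Remark~\ref{remark:k.many.weakly.normal} both weak normality of $X$ relative to $C'$ and $k$-inconsistency of the family of $C'$-conjugates of $X$ are equivalent to the existence of some $k$ making $k$ distinct conjugates have empty intersection. Consequently, $X$ thorn divides over $C$ iff there is some $C' \supseteq C$ (anywhere in $\bar M$) with $X$ weakly normal relative to $C'$ and not almost over $C'$.

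For the forward direction, let $p$ be an extension of $tp(a/B)$ to a saturated model $M$ witnessing thorn non-forking. ``Saturated'' trivially implies ``$|C|^+$-saturated'', and if some $X \in p$ were weakly normal relative to $C \subseteq C' \subseteq M$ but not almost over $C'$, then by the above observation $X$ would thorn divide over $C$, contradicting the choice of $p$.

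For the reverse direction, suppose $p$ is as in the hypothesis. We show that no formula of $p$ thorn divides over $C$; thorn non-forking of $tp(a/B)$ over $C$ then follows by the standard extension arguments for (thorn) non-dividing types. Assume for contradiction that $X = \phi(x,b) \in p$ thorn divides over $C$, witnessed by some $C^* \supseteq C$ over which $X$ strongly divides. The main obstacle is that $C^*$ may have arbitrary size and need not sit inside $M$. To overcome this, observe that $k$-inconsistency of the $C^*$-conjugates of $X$ is exactly the statement that
\[
\bigcup_{i=1}^{k} tp(b/C^*)(y_i) \cup \{y_i \neq y_j : i<j\} \cup \{\phi(x,y_i) : i \le k\}
\]
is inconsistent in the variables $x, y_1, \dots, y_k$. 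Compactness extracts a finite $C_0 \subseteq C^*$ such that the $(C \cup C_0)$-conjugates of $X$ are already $k$-inconsistent; since $C \cup C_0 \subseteq C^*$, the not-almost-over condition passes down and $X$ strongly divides over $C \cup C_0$. Now $|Cb \cup C_0| \leq |C|$, so $|C|^+$-saturation of $M$ lets us realize $tp(C_0/Cb)$ inside $M$ by some $C'_0 \subseteq M$; any automorphism of $\bar M$ over $Cb$ sending $C_0$ to $C'_0$ fixes $X$ setwise and therefore transfers the strong dividing of $X$ from $C \cup C_0$ to $C \cup C'_0 \subseteq M$. By the equivalence from the first paragraph, $X$ is weakly normal relative to $C \cup C'_0$ and not almost over $C \cup C'_0$, contradicting the hypothesis on $p$. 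The decisive step is the compactness reduction to a finite $C_0$, which is precisely what makes the $|C|^+$-saturation of $M$ sufficient; the automorphism argument afterwards is routine.
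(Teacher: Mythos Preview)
Your proof is correct and follows essentially the same route as the paper: both reduce the proposition to the equivalence ``$X$ strongly divides over $C'$ $\Leftrightarrow$ $X$ is weakly normal relative to $C'$ and not almost over $C'$'' (which you derive from Remark~\ref{remark:k.many.weakly.normal}, and the paper packages as Remarks~\ref{remark:old.2.1} and~\ref{remark:old.2.2}), together with the standard reformulation of thorn non-forking as the existence of a non-thorn-dividing extension over a $|C|^{+}$-saturated model. The only difference is one of detail: the paper simply asserts that one may ``using the saturation of $M$, allow $d$ to be chosen from $M$'' and cites Adler, whereas you spell out the compactness reduction to a finite $C_{0}\subseteq C^{*}$ and the automorphism transfer into $M$ explicitly.
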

\begin{proof}
By definition $tp(a/B)$ does  not thorn fork over $C$ if and only if for all ${\hat B}\supseteq B$, $tp(a/B)$ has an extension to a complete type $p$ over ${\hat B}$ which does not thorn divide over $C$. 
Let $M$ be a $(|T| +|C|)^{+}$-saturated model containing $B$.
\newline
{\em Claim 1.} The type $tp(a/B)$ does not thorn fork over $C$ iff $tp(a/B)$ has an extension  to a complete type $p(x)$ over $M$ which does not thorn divide over $C$.
\newline
{\em Proof of Claim 1.}  Clearly left implies right. Now, suppose the right hand side holds. Let $p_{0}(x) =tp(a/B)$.
Consider ${\hat B}\supseteq B$ and set $\Gamma(x) = p_{0}(x) \cup \{\neg\theta(x,b): \theta(x,y)$ an $L$-formula, $b\in {\hat B}$ and $\phi(x,b)$ thorn divides over $C$\}. We will show that $\Gamma$ is consistent. Let $\Gamma_{0}(x)$  be a finite subset of $\Gamma$ and let $B_{0}$ be the finite set of parameters from ${\hat B}$ which appear in $\Gamma_0$. We can realize $tp(B_{0}/C)$ in $M$ by $B_{0}'$  in such a way that the elements of $B_{0}$ which are in $B$ are realized by themselves.  Let $\Gamma_{0}'$ be the copy of $\Gamma_{0}$ over $B_{0}'$. Then $\Gamma_{0}'\subseteq p(x)$. So $\Gamma_{0}'$ is consistent, hence also $\Gamma_{0}$ is consistent. Thus $\Gamma$ is consistent as well and extends to a complete type over ${\hat B}$ extending $p_{0}$ and which does not thorn divide over $C$. 

\vspace{2mm}
\noindent
{\em Claim 2.} Let $p(x)\in S(M)$. Then $p$ does not thorn divide over $C$ iff for every $\theta(x)\in p(x)$, $\theta$ does not strongly divide over any set $(C,d)$ where $d$ is a tuple from $M$. 
\newline
{\em Proof of Claim 2.}  By definition, $p$ does not thorn divide over $C$ iff for every $\theta(x)\in p$ there is no tuple $d$ such that $\theta(x)$ strongly divides over $C,d$.  Now if $\theta(x)\in p$ and $\theta$ strongly divides over $C$ together with some tuple $d$, then by saturation of $M$ let $d'\in M$ realize the type of $d$ over $C$ together with the parameters from $\theta$, so $\theta$ strongly divides over $(C,d')$. This suffices. 

To finish the proof of the Proposition from Claim 2, we just have to note that if $X$ is a definable set, and $C'$ a set of parameters, then $X$ strongly divides over $C'$ if and only if $X$ is weakly normal relative to $C'$ and is not almost over $C'$, which  is stated after Remark 2.1.

Let us finally remark that the proof above of \ref{proposition:thorn.weakly.normal} is roughly the content of Proposition 4.7 of \cite{Adler}. 
\end{proof}

\begin{Remark}
A few years ago, Ludomir Newelski asked about absoluteness of the notion of the thorn forking,
i.e. whether this notion depends on the choice of the model of set theory. Using the above Proposition \ref{proposition:thorn.weakly.normal} (or alternatively some parts of \cite{Adler}),
one can show that indeed the notion of the thorn forking does not depend on the choice of the monster model. This was discussed with Newelski during a talk on this paper.
\end{Remark}

\vspace{4mm}
\noindent
Following the referee's request, we will give a  self-contained proof of Theorem 3.3 of \cite{Ealy-Onshuus} (which we understand to be the main result of \cite{Ealy-Onshuus}).  Recall from \cite{Ealy-Onshuus} (and our introduction) that for $A\subseteq B$ and $p(x)\in S(B)$, $p$ is said to ``stably fork with parameters" over $A$ if $p$ implies a finite disjunction of formulas $\psi_{i}(x)$ (over additional parameters) each of which ``stably divides with parameters" over $A$. Where again $\psi_{i}(x)$ stably divides with parameters, over $A$, if there is some tuple $d$ of parameters and a formula $\chi(x,z)$ with parameters from $d$ which is stable (does not have the order property) and such that $\psi_{i}(x)$ is equivalent to $\chi(x,c)$ for some $c$, and such that $\psi_{i}(x)$ divides over $(A,d)$. 

\begin{Proposition} \label{proposition:Theorem3.3} Let $A\subseteq B$. 
$tp(a/B)$ thorn forks over $A$ iff $tp(a/B)$ ``stably forks with parameters" over $A$.
\end{Proposition}

\begin{proof} First by the routine methods discussed earlier, it is enough to fix  some $(|T|+ |A|)^{+}$-saturated model $M$ containing $B$, let $p(x)$ be a complete type over $M$, and prove that 
\newline
(*) $p$ thorn divides over $A$ iff $p$ stably divides with parameters, over $A$. (Also in both cases we know by saturation that additional parameters witnessing strong dividing or stable dividing with parameters, can be found in $M$.)

The ``left implies right" direction of (*) follows immediately from the ``right implies left" direction of Proposition 3.1 (taking $B = M$).
This is because if $A\subseteq C \subseteq M$ and $X$ is a definable set in $p$ which is not almost over $C$ and is weakly normal relative to $C$, then first $X$ divides over $C$ by Remark 2.2, and secondly $X$ is an instance of an $L_{C}$-formula $\chi(x,y)$ which is stable, by Remark 2.4.  So $p$ stably forks with parameters, over $A$.

The ``right implies left" direction is proved in \cite{Ealy-Onshuus} using Theorem 5.1.1 in \cite{Onshuus}. Our upcoming proof of ``right implies left" will give another, fairly explicit, account of this Theorem 5.1.1 of \cite{Onshuus}. 
So let us assume the right hand side of (*). Let $d\in M$, let $\phi(x,y)$ be a formula over $d$ which is stable, let $\phi(x,c)\in p(x)$ for some $c\in M$ and suppose that $\phi(x,c)$ divides over $(A,d)$.  We want to find a formula in $p(x)$ which thorn divides over $A$.  We make use of local stability as in Chapter 1 of \cite{Pillay-book}.  Although the formula $\phi(x,y)$ is not over $\emptyset$, everything works as long as we make sure to work over the parameters $d$, which we do.  We consider $p|\phi$, the restriction of $p$ to a complete $\phi(x,y)$-type.  So $p|\phi$ divides over 
$(A,d)$. Let $e$ be the canonical base of $p|\phi$, so $e\notin acl(Ad)$.  By Lemma 1.2.(ii) in Chapter 1 of \cite{Pillay-book}, there are $a_{1},..,a_{n}\in M$ such that $a_{i+1}$ realizes the restriction of (the complete type) $p$ to $(A,d,e, a_{1},..,a_{i})$ and such that $e\in acl(A,d,a_{1},..,a_{n})$.  Choose $n$ minimal witnessed by $a_{1},..,a_{n}\in M$.  Let $d' = (a_{1},..,a_{n-1})$. 
Hence $e\notin acl(A,d,d')$, but $e\in acl(A,d,d',a)$ whenever $a$ realizes $p|(A,e,d,d')$.  By compactness there is a formula $\chi(x)\in p(x)|(A,e,d,d')$ such that $\chi(x)$ implies that  $e\in acl(A,d,d,x)$. Working back in the original language $L$, let $e'$ be the canonical parameter of $\chi(x)$ and without loss $\chi(x)$ is of the form $\chi(x,e')$ for some $L$-formula $\chi(x,w)$.  So $e'\in dcl(A,e,d,d')$.
\newline
{\em Claim 1.}  $e'\notin acl(A,d,d')$.
\newline
 Suppose otherwise that $e'\in acl(A,d,d')$. So, as $e\notin acl(A,d,d')$, it is easy to find some realization $a'$ of $\chi(x,e')$ with $e\notin acl(A,d,d',a')$, a contradiction.  So Claim 1 is established.

\vspace{2mm}
\noindent
{\em Claim 2.}  For any realization $a'$ of $\chi(x,e')$, $e'\in acl(A,d,d',a')$
\newline
{\em Proof of Claim 2.}  By choice of $\chi(x,e')$, for any realization $a'$ of $\chi(x,e')$, $e\in acl(A,d,d', a')$. But $e'\in dcl(A,e,d,d')$, hence, $e'\in acl(A,d,d', a')$.

\vspace{2mm}
\noindent
Claims 1 and 2 say that $\chi(x,e')$ strongly divides over $A,d,d'$.  So $\chi(x,e')$ thorn divides over $A$. As $\chi(x,e')\in p(x)$ this means that $p(x)$ thorn divides over $A$, completing the proof.

\end{proof}

\vspace{5mm}
\noindent
Finally we discuss Corollary 4.2 of \cite{Ealy-Onshuus}.  This states that if $\phi(x,b)$ thorn divides over $C$, then a tuple $d$ such that $\phi(x,b)$ strongly divides over $(C,d)$ can be found in $dcl(C,b)$. 

In fact the role of $C$ is also problematic.  Remember that $\phi(x,b)$ is defined to thorn divide over $C$ if there is $C'\supseteq C$ such that $\phi(x,b)$ strongly divides over $C'$  ($\phi(x,b)$ is not almost over $C'$ and the set of $C'$-conjugates of $\phi(x,b)$ is $k$-inconsistent for some $k$).  But then, if $\phi(x,b)$ thorn divides over $C$, it also thorn divides over $\emptyset$, and so Corollary 4.2 of \cite{Ealy-Onshuus} would yield that we can find $d\in dcl(b)$ such that $\phi(x,b)$ strongly divides over $d$.

Translating into the weakly normal language we obtain, via Remark 2.1:  
\begin{Question} Let $X$ be a definable set with canonical parameter $e$. Suppose that for some set of parameters $C$, $X$ is weakly normal, relative to $C$, and $e\notin acl(C)$. Can one find such a set of parameters $C\subseteq dcl(e)$?
\end{Question} 

We first give a couple of examples related to  Question 3.4 and its context. 
We start with an example giving  a positive answer for strongly minimal theories, to a slightly weaker question. 
This slightly weaker question, in the context of stable theories, states: suppose $p(x)$ is a stationary type which forks over $\emptyset$. Is there a definable set (formula) $X$ in $p$, with canonical parameter $e$, such that for some $C\subseteq dcl(e)$, $e\notin acl(C)$, and $X$ is weakly normal relative to $C$?

We will work with Morley rank which corresponds to algebraic independence dimension for tuples in  strongly minimal sets. 

\begin{Lemma} Suppose $T$ is a $1$-sorted strongly minimal theory with elimination of imaginaries. Let $p({\bar x})$ be a stationary complete type over a tuple ${\bar e}$ where ${\bar e}$ is the canonical base of $p$. Suppose that $p$ forks over $\emptyset$. Then there is a formula (definable set) $X$ in $p$ with canonical parameter $\bar{e}$ and some ${\bar c}\in dcl({\bar e})$ such that ${\bar e}\notin acl({\bar c})$ and $X$ is weakly normal relative to ${\bar c}$.
\end{Lemma}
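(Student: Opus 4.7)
The plan is to exploit Morley-rank arithmetic in the pregeometry of the strongly minimal sort. Fix a realization $\bar{a} \models p$ and set $d = MR(p)$, $m' = MR(\bar{e})$, $s = MR(\bar{e}/\bar{a})$. I may assume $\bar{e} \notin acl(\emptyset)$, since otherwise no $\bar{c} \in dcl(\bar{e}) \subseteq acl(\emptyset)$ can satisfy $\bar{e} \notin acl(\bar{c})$; under this assumption $MR(\bar{a}) > d$, because equality would make $\bar{a}$ independent from $\bar{e}$ and place the canonical base of $p$ in $acl(\emptyset)$. Combining $MR(\bar{a},\bar{e}) = d + m' = MR(\bar{a}) + s$ with $MR(\bar{a}) > d$ gives $s < m'$.

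Next I would produce $\bar{c}$ via exchange. Pull out from the coordinates of $\bar{e}$ a subtuple $\bar{c}$ of length $s$ whose entries are algebraically independent over $\bar{a}$; by exchange they are also algebraically independent over $\emptyset$, so $MR(\bar{c}) = s$ and $\bar{c}$ is independent from $\bar{a}$ over $\emptyset$. Since $\bar{c}$ is a subtuple of $\bar{e}$ we get $\bar{c} \in dcl(\bar{e})$ automatically, and $MR(\bar{c}) = s < m'$ forces $\bar{e} \notin acl(\bar{c})$. A short rank calculation, using $\bar{c} \in dcl(\bar{e})$ and the independence of $\bar{c}$ from $\bar{a}$, then gives $MR(\bar{e}/\bar{a},\bar{c}) = (d + m') - (MR(\bar{a}) + s) = 0$, so $\bar{e} \in acl(\bar{a}, \bar{c})$.

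To package this algebraicity into a single formula, start from $\phi(\bar{x}, \bar{e}) \in p$ of Morley rank $d$, Morley degree $1$, and canonical parameter $\bar{e}$, which is standard in a strongly minimal theory with elimination of imaginaries. Choose an $L$-formula $\alpha(\bar{y}, \bar{x}, \bar{z})$ witnessing $\bar{e} \in acl(\bar{a}, \bar{c})$ and conjoin a solution-count bound, $\alpha'(\bar{y}, \bar{x}, \bar{z}) := \alpha(\bar{y}, \bar{x}, \bar{z}) \wedge (\exists^{\leq N} \bar{y}')\,\alpha(\bar{y}', \bar{x}, \bar{z})$, for a suitable $N$. Writing $\bar{c} = f(\bar{e})$ for a $\emptyset$-definable $f$, set $X := \{\bar{x} : \phi(\bar{x}, \bar{e}) \wedge \alpha'(\bar{e}, \bar{x}, f(\bar{e}))\}$. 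Weak normality relative to $\bar{c}$ is then built in: for any $\bar{a}' \in X$ the clause $\alpha'$ pins $\bar{e}$ to at most $N$ solutions in $\bar{y}$ of $\alpha(\bar{y}, \bar{a}', \bar{c})$, so $\bar{e} \in acl(\bar{a}', \bar{c})$. Also $X \in p$, because any automorphism fixing $\bar{e}$ fixes $\bar{c}$, so $\alpha'(\bar{e}, \cdot, \bar{c})$ holds on every realization of $p$; in particular $X$ contains all of $p$, giving $MR(X) = d$ and $MD(X) = 1$.

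The hard part will be verifying that the canonical parameter of $X$ is still $\bar{e}$ after conjoining $\alpha'$, rather than some proper reduct of $\bar{e}$. This is settled by the observation that $X$ has $MR = d$, $MD = 1$, and $p$ as its unique generic type, so the canonical parameter of $X$ must coincide with the canonical base of $p$, which is $\bar{e}$.
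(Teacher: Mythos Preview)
Your proof is correct and follows essentially the same route as the paper: choose $\bar c$ as a subtuple of $\bar e$ of the right dimension, use additivity of Morley rank to force $\bar e\in acl(\bar a,\bar c)$, and then extract a formula by compactness. You are in fact more careful than the paper on two points --- you explicitly select $\bar c$ algebraically independent over $\bar a$ (which is what makes the step $RM(\bar a/\bar c)=RM(\bar a)$, hence $RM(\bar e/\bar a,\bar c)=0$, actually go through), and you verify via the unique-generic-type argument that the resulting $X$ has canonical parameter interdefinable with $\bar e$, a detail the paper's proof leaves implicit.
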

\begin{proof} Thanks to the referee for the suggestions which gave rise to the following correct proof. 

Let ${\bar a}$ realize $p$. 
By our elimination of imaginaries assumption ${\bar e}$ can be taken to be a (finite) tuple from the strongly minimal home sort. Let ${\bar c}$ be a subtuple of ${\bar e}$ which is a basis of ${\bar e}$ over ${\bar a}$. Hence
\newline
{\em Claim I.}   ${\bar e}\in acl({\bar a}, {\bar c})$.

\vspace{2mm}
\noindent
Note that ${\bar c}$ is independent from $\bar a$ over $\bar \emptyset$, so as ${\bar a}$ is {\em not} independent from ${\bar e}$ over $\emptyset$ we have:
\newline
{\em Claim II.} ${\bar e}\notin acl({\bar c})$. 

Let ${\bar y}$ be a tuple of variables of length that of ${\bar e}$ and let ${\bar z}$ be the subtuple of ${\bar y}$ corresponding to the subtuple ${\bar c}$ of ${\bar e}$. 
By Claim I and compactness we can find a $L$-formula $\phi({\bar x},{\bar y})$ in $tp({\bar a}, {\bar e})$ such that
\newline
{\em Claim III.}  $\phi({\bar x}, {\bar y})$ implies ${\bar y}\in acl({\bar x}, {\bar z})$.
\newline
We may also assume that $RM(\phi({\bar x},{\bar e})) = RM(p({\bar x})) = n$, say,  and $dM(\phi({\bar x}, {\bar e})) = 1$ where $RM$ is Morley rank, and $dM$ is Morley degree. 

\vspace{2mm}
\noindent
{\em Claim IV.}  ${\bar e}$ is a canonical parameter of the set $X$ defined by $\phi({\bar x}, {\bar e})$.
\newline
{\em Proof of Claim IV.}  This follows from ${\bar e}$ being the canonical base of $p$ and from the assumptions on 
Morley rank and degree. Here are some details. Working in a suitably saturated model $M$ let  ${\bar e}'$ have the same type as ${\bar e}$ over 
$\emptyset$ and with ${\bar e}'\neq {\bar e}$. Suppose for a contradiction that $\phi({\bar x}, {\bar e})$ is {\em 
equivalent} to $\phi({\bar x}, {\bar e}')$. Let $p'({\bar x})\in S(M)$ be the unique nonforking extension of $p$ over $M$. 
Note that $p'$ is the unique complete type over $M$ with Morley rank $n$ and which contains the formula $\phi({\bar x}, 
{\bar e})$. Let $f$ be an automorphism of $M$ with $f({\bar e}) = {\bar e}'$. Then $f(p')$ is the unique complete type 
over $M$ of Morley rank $n$ and containing $\phi({\bar x}, {\bar e}')$. But as the latter formula is equivalent to $\phi({\bar x}, {\bar 
e})$, $f(p') = p'$, contradicting the characteristic property of ${\bar e}$ being the canonical base of $p'$.  We have proved that an automomorphism (of  an ambient saturated model) fixes ${\bar e}$ if and only if  it fixes $X$ setwise, as required.

\vspace{2mm}
\noindent
Now (III) above as well as (IV) implies that $X$ is weakly normal relative to ${\bar c}$. Together with Claim II, this completes the proof of the lemma, bearing in mind that $\bar c$ is a subtuple of $\bar e$.  
\end{proof} 

The next example is where both the hypothesis and conclusion of Question 3.3 fail, but the hypothesis``almost" holds.

We will take as $T$ the theory of the free pseudoplane.  See  Example 6.1 in Chapter 4 of \cite{Pillay-book} as well as Section 2 of \cite{Baudisch-Pillay} for some more details.  This is also called the infinite forest in \cite{Adler}.  The language consists of a single binary relation $I$, and the axioms for $T$ say that $I$ is symmetric, irreflexive, for each $a$ there are infinitely $b$ such that $I(a,b)$, and there are no ``loops", namely for each $n\geq 3$ there do not exist $a_{0}, a_{1},...., a_{n}$ such that $I(a_{i},a_{i+1})$ (for all $i=0,..,n-1$), the $a_{i}$ for $i\leq n-1$ are distinct, and $a_{0} = a_{n}$. 
$T$ is complete, and $\omega$-stable, where the Morley rank of the home sort is $\omega$.  A saturated model of $T$ consists of infinitely many connected components. In \cite{Baudisch-Pillay} it is pointed out that $T$ has weak elimination of imaginaries.

Fix a (saturated) model ${\bar M}$, let $a\in {\bar M}$, then the formula $I(x,a)$ isolates a complete type $p_{a}(x)$ over $a$.  $a$ is the canonical base of $p_{a}$ as well as the canonical parameter of the formula $I(x,a)$.

\begin{Lemma} (i) $I(x,a)$ is not weakly normal (i.e. relative to $\emptyset$).
\newline
(ii) There is no $c\in {\bar M^{eq}}$ such that $a\notin acl(c)$ and $I(x,a)$ is weakly normal relative to $c$.
\end{Lemma}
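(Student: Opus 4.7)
For (i), the plan is to apply the characterization from Section 2: since $a$ is the canonical parameter of $I(x,a)$, weak normality of $I(x,a)$ relative to $\emptyset$ amounts to requiring $a \in acl(b)$ for every $b$ satisfying $I(b,a)$. In the free pseudoplane, however, $acl(b) = \{b\}$ for any single vertex $b$: any other vertex admits infinitely many conjugates over $b$, by the homogeneity of $\bar{M}$ together with the infinite valence at every vertex. Since $a \neq b$ (as $I$ is irreflexive), this fails, and part (i) is done.

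For (ii) I would argue by contradiction: assume $c \in \bar{M}^{eq}$ with $a \notin acl(c)$ and $I(x,a)$ weakly normal relative to $c$. By weak elimination of imaginaries (noted in \cite{Baudisch-Pillay}), $c$ is interalgebraic with a real tuple $\bar{d}$; since both the weak normality condition and the hypothesis $a \notin acl(c)$ depend only on $acl(c)$, I may replace $c$ by $\bar{d}$, so the assumption becomes $a \notin acl(\bar{d})$ and $a \in acl(b, \bar{d})$ for every neighbor $b$ of $a$. The key structural fact I would then use is that for any real set $A$, $acl(A)$ consists, within each connected component, of exactly the vertices of the subtree spanned by the elements of $A$ lying in that component: intermediate vertices on such paths are forced because paths in a tree are unique, while vertices off every such path have infinitely many conjugates over $A$.

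Given this, I split into two cases. If the component of $a$ contains no element of $\bar{d}$, then for any neighbor $b$ of $a$ the set $acl(b,\bar{d})$ meets the component of $a$ only in $\{b\}$, so $a \notin acl(b,\bar{d})$. Otherwise, let $T$ be the subtree spanned by the elements of $\bar{d}$ lying in the component of $a$; then $a \notin T$ because $a \notin acl(\bar{d})$, so there is a unique shortest path from $a$ to $T$, and I would take $b$ to be the neighbor of $a$ along this path. For this choice, the subtree spanned by $\{b\} \cup \bar{d}$ in the component of $a$ equals $T$ together with the portion of the $a$-to-$T$ path starting at $b$, which does not contain $a$; hence $a \notin acl(b,\bar{d})$. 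Either case contradicts weak normality.

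The delicate step is selecting the correct neighbor $b$ in the second case: for most neighbors $b$ of $a$, the path from $b$ to $T$ must backtrack through $a$, so $a$ genuinely does lie in $acl(b,\bar{d})$ for those $b$; it is only the unique neighbor along the $a$-to-$T$ path that witnesses failure of weak normality. The reduction via weak EI to a real parameter tuple is also essential, since the spanned-subtree description of $acl$ only applies to real sets.
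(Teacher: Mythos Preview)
Your proof is correct and follows essentially the same route as the paper: reduce to a real tuple via weak elimination of imaginaries, then pick the neighbour $b$ of $a$ lying on the path toward the parameters and show $a\notin acl(b,\bar d)$. The only difference is presentational---you invoke a general spanned-subtree description of $acl$ and split explicitly into the two cases (parameters in the same component or not), whereas the paper argues the same geometry more ad hoc.
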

\begin{proof} (i) Fixing $b$ such that $I(a,b)$, $I(b,y)$ is infinite and isolates a complete type over $b$ (as mentioned above).  So $a\notin acl(b)$.
\newline
(ii)  Choose $c\in {\bar M}^{eq}$ such that $a\notin acl(c)$ and we want to show that $I(x,a)$ is not weakly normal relative to $c$.  By the weak elimination of imaginaries we may assume that $c$ is a real tuple.  The assumption that $a\notin acl(c)$ implies that there are no two elements $c_{1}, c_{2}$ from the tuple $c$ such that the unique shortest path from $c_{1}$ to $c_{2}$ goes via $a$. It follows that there is (unique) $b$ such that $I(a,b)$ and such that all elements of $c$ are on (shortest) paths from $a$ which go through $b$ (or in different components of the model). 
Now choosing this unique $b$ realizing $I(x,a)$, we see that $a\notin acl(b,c)$, because there are infinitely many $a'$ such that $I(b,a')$ and $a'$ is not on a path from $b$ to any element of $c$, and all such $a'$ have the same type over $b,c$. 
\end{proof}

\begin{Lemma}\label{lemma:pseudoplane2} Let $I(b,a)$. Let $\phi(x)$ be the formula $I(x,a)\wedge x\neq b$. Then for any $b'$ realizing $\phi(x)$ we have that $a\in acl(b,b')$.
\end{Lemma}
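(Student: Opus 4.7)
The plan is to show something slightly stronger, namely that $a \in dcl(b,b')$, by exhibiting $a$ as the unique solution of a formula over $\{b,b'\}$. Consider the formula $\psi(y) := I(y,b) \wedge I(y,b')$. Clearly $a$ satisfies $\psi$ since $I(b,a)$ is symmetric and $b'$ realizes $I(x,a)$.

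Next I would argue uniqueness using the no-loops axiom. Suppose $a'$ also satisfies $\psi(y)$ and $a' \neq a$. Then we have the edges $I(b,a)$, $I(a,b')$, $I(b',a')$, $I(a',b)$, with the four vertices $b, a, b', a'$. Since $b \neq b'$ (by the conjunct $x \neq b$ in $\phi$) and $a \neq a'$ by assumption, this configuration violates the prohibition on loops of length $4$ (taking $a_0 = a_4 = b$, $a_1 = a$, $a_2 = b'$, $a_3 = a'$ in the axiom). Hence $a$ is the unique realization of $\psi(y)$.

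Therefore $a \in dcl(b,b') \subseteq acl(b,b')$, as desired. I do not expect any serious obstacle: the only thing to be a little careful about is checking that the four vertices in the putative $4$-cycle are genuinely distinct, but this follows from $b \neq b'$ (built into $\phi$) and $a \neq a'$ (the hypothesis for contradiction), together with the fact that an edge $I(u,v)$ forces $u \neq v$ by irreflexivity, so no two of $\{a, a'\}$ and $\{b,b'\}$ can coincide across the bipartition.
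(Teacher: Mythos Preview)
Your proof is correct and is essentially the same as the paper's. The paper's one-line argument just says that $a$ lies on the unique shortest path between $b$ and $b'$; your version spells out exactly why that path (and hence the middle vertex $a$) is unique, namely because a second common neighbour $a'$ of $b$ and $b'$ would produce a $4$-cycle, contradicting the no-loops axiom.
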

\begin{proof} $a$ is in the unique shortest path between $b$ and $b'$.
\end{proof}

However note that the canonical parameter of the formula $\phi(x)$ from Lemma \ref{lemma:pseudoplane2} is $(a,b)$ and the lemma says that $\phi(x)$ is weakly normal with respect to $b$.

\section{Theories with selectors} 
In this section we give a family of  negative answers to Question 3.4 (so counterexamples to 4.2 of \cite{Ealy-Onshuus}). 
(In \cite{Ealy-Onshuus} there is also a Theorem 4.1 from which their Corollary 4.2 is deduced without proof, and we assume that our examples also give negative answers to Theorem 4.1 of \cite{Ealy-Onshuus} although we did not, and do not want to, check details.)

It is a simple construction (maybe known) which for any theory $T$ produces a ``mild expansion" (in the sense of also adding a new sort) $T^{S}$, which for any infinite definable set $X$ in $T$ with canonical parameter $e$ yields some $d$ not in $dcl(e)$ such that $X$ is not almost over $d$ and $X$ is weakly normal relative to $d$ in $T^{S}$.  The construction is related to but distinct from  the {\em generic variations} of  \cite{Baudisch}.

We fix a complete theory $T$ in a language $L$, which we assume, for simplicity, to be relational.  There is no harm in assuming $T$ to be $1$-sorted.
We will define a language $L^{+}$ and complete $L^{+}$-theory $T^{+}$, and a language $L^{S}$ and complete $L^{S}$-theory $T^{S}$, all depending on $T$, and with $L^{+}\subseteq L^{S}$. 

Roughly speaking $T^{+}$ is the theory of a set equipped with an equivalence relation $E$ with infinitely many classes, such that each $E$-class  has structure making it a model of $T$, this is uniform across the $E$-classes, and moreover any interaction (on the level of the language $L$) between distinct $E$-classes is forbidden.
Model-theoretically $T^{+}$ is quite transparent, but one should give a definite formalism, which we do now. 

We discuss the language $L^{+}$ and theory $T^{+}$ simultaneously, sometimes mixing up syntax and semantics.  There will be two sorts in $L^{+}$, a sort $P$ equipped with an equivalence relation $E$, and the second sort is just $P/E$, and we have the canonical function $f_{E}:P \to P/E$, in the language $L^{+}$.

For $a$ of sort $P$, $a/E$ denotes the equivalence class of $a$ as an element of the sort $P/E$. And $[a]_{E}$ denotes the equivalence class of $a$ as a subset of the sort $P$. 

For each $n$-ary relation symbol $R$ of the language $L$, we will have an $(n+1)$-ary relation symbol 
$R^{+}(z,x_{1},..,x_{n})$ where $z$ is a variable of sort $P/E$ and $x_{1},..,x_{n}$ variables of sort $P$.  In addition to the equality symbols on the two sorts, this is the language $L^{+}$. The axioms of $T^{+}$ say that $f_{E}$ is what it should be, that for any $z\in P/E$, $R^{+}(z, x_{1},..,x_{n})$ implies that the $x_{i}$ are in the equivalence class determined by $z$, and that each $E$-class $C$ is a model of $T$. Where the $E$-class $[a]_{E}$ is viewed as an $L$-structure by defining $R(b_{1},..,b_{n})$ to hold in $C$ iff  $R^{+}(a/E,b_{1},..,b_{n})$ holds in the ambient $L^{+}$-structure.

To summarise, $L^{+}$ is the language with sorts $P$ and $P/E$, and symbols $E$, $f_{E}$ and the  $R^{+}$ for $R\in L$.  
\begin{Lemma} (i) $T^{+}$ is complete,
\newline
(ii)  Assuming that $T$ has quantifier elimination (i.e. is Morleyized), then $T^{+}$ has quantifier elimination too.
\newline
(iii) $P/E$ is an indiscernible set in $T^{+}$.
\end{Lemma}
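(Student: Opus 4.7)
The starting observation is that a model $M \models T^+$ decomposes as the disjoint union of its $E$-classes, each of which is a model of $T$ in the induced $L$-structure, and that the language $L^+$ encodes no interaction between distinct classes beyond equality and the projection $f_E$. All three parts flow from this.

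For (ii), I would run the standard back-and-forth test for QE: take $\omega$-saturated $M_1, M_2 \models T^+$, an isomorphism $f : A \to B$ of finitely generated $L^+$-substructures, and an element $a \in M_1$, and extend $f$ to include $a$. If $a \in (P/E) \setminus A$ I pick any fresh class in $M_2$, which exists because $P/E$ is infinite; since $A$ is closed under $f_E$, no point of $A \cap P$ lies in the class $a$, so every $R^+$-atomic formula involving $a$ is vacuously decided. If $a \in P$ with $a/E \notin A$, first add $a/E$ by the previous case and then send $a$ to any point of the newly added class. The main case is $a \in P$ with $c := a/E \in A$, $c' := f(c)$: I must find $b \in [c']_E \subseteq M_2$ realizing over $f(A \cap [c]_E)$ the same $L$-quantifier-free type that $a$ realizes over $A \cap [c]_E$ in $[c]_E$. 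By QE of $T$ this QF type is a complete $L$-type consistent with $T$, so it suffices that $[c']_E$ be $\omega$-saturated as an $L$-structure. That saturation transfer is the one nontrivial ingredient: given an $L$-type $q(x)$ over a finite $F \subseteq [c']_E$, the $L^+$-type
\[
\{f_E(x)=c'\}\cup\{R^+(c',x,\bar d) : R(x,\bar d)\in q\}
\]
over $F\cup\{c'\}$ is finitely satisfiable in $M_2$, hence realized by $\omega$-saturation, which realizes $q$ in $[c']_E$.

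Part (i) then drops out of (ii) by Morleyization: replacing $T$ by its Morleyization and enlarging $L^+$ accordingly yields a theory that has QE by (ii); but the enlarged $L^+$ still has no constant or nullary relation symbols, so there are no atomic sentences at all, every sentence is equivalent modulo the theory to $\top$ or $\bot$, and the theory is complete. Since this is a definitional expansion of $T^+$, the original $T^+$ is complete. For (iii), in a sufficiently saturated $\bar M \models T^+$ the same saturation transfer shows every $E$-class is $L$-saturated of the ambient cardinality; by completeness of $T$ and uniqueness of saturated models of $T$, all classes are pairwise $L$-isomorphic. Given any two $n$-tuples of distinct elements of $P/E$, the bijection between them extends to a permutation $\sigma$ of $P/E$, and a choice of $L$-isomorphism between each class and its $\sigma$-image assembles into an $L^+$-automorphism of $\bar M$ carrying the first tuple to the second. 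Thus $P/E$ is an indiscernible set, and the only genuine obstacle across the three parts is the saturation transfer from $L^+$ to individual classes.
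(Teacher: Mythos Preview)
Your argument is correct and follows the paper's line, which is extremely terse: the paper says only that (i) is clear because any two saturated models of $T^{+}$ are visibly isomorphic (a family of models of $T$ indexed by $P/E$), that (ii) follows by back-and-forth between $\omega$-saturated models, and leaves (iii) unargued. You have supplied precisely the details the paper omits, and the point you single out as the one nontrivial ingredient---that $\omega$-saturation of $M_2$ transfers to each $E$-class as an $L$-structure---is indeed the heart of the matter.

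Two small slips, neither of which affects the strategy. First, in the case $a\in P$ with $a/E\notin A$ you cannot send $a$ to \emph{any} point of the new class: you must match the quantifier-free $L$-type of $a$ over $\emptyset$ inside its class (e.g.\ the truth values of $R^{+}(a/E,a,\dots,a)$). But this is exactly your main case with $A\cap[c]_E=\emptyset$, so your saturation-transfer argument already covers it. Second, the displayed $L^{+}$-type should also include the translations of the negated atomic formulas and the (in)equalities in $q$, not only the positive instances $R^{+}(c',x,\bar d)$; as written it does not determine $q$. Your route to (i) via Morleyization and the absence of closed terms is a slight detour compared with the paper's one-line saturated-model remark, but it is equally valid.
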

\begin{proof} (i) is clear as a model of $T^{+}$ is just a family of models of $T$ indexed by $P/E$, and any two saturated (in the strict sense) models will be isomorphic. For if $M$ is $\kappa$-saturated of cardinality $\kappa$ (where $\kappa > |L^{+}|$) then there will be $\kappa$ $E$-classes and moreover each $E$-class will be a $\kappa$-saturated model of $T$ of cardinality $\kappa$.  Alternatively (without using the existence of saturated models in the strict sense) one can just do a back-and-forth between any two $\omega$-saturated models of $T^{+}$
\newline
(ii) follows by either observing that if $M$ is $\kappa$-saturated of cardinality $\kappa$ then any permutation of the $P/E$ sort extends to an automorphism of $M$, or doing a back-and-forth argument between $\omega$-saturated models (or in a fixed $\omega$-saturated model).

\end{proof}

Of course we can assume that $T$ has QE, so we obtain complete understanding of definability in models of $T^{+}$. 
Note that if $M$ is a saturated model of $T$  (in the weak or strong sense)  then  the model $M^{+}$ of $T^{+}$ where $E$ has $\kappa$-many classes and each class is isomorphic (as an $L$-structure) to $M$, is also saturated, in the appropriate sense.   We write $P(M^{+})$ and $(P/E)(M^{+})$ for the corresponding sorts in  $M^{+}$.

One immediate observation which will be useful for us later is:

\begin{Lemma} Let $X\subset P(M^{+})$ be definable in $M^{+}$ over $a$ from $P(M^{+})$. Let $C$ be the $E$-class (as a subset of $P(M^{+})$) of $a$. Then $X = X_{1}\cup X_{2}$ where $X_{1}$ is a subset of $C$ definable over $a$ in $L$, and $X_{2}$ is the union of a  family of uniformly $\emptyset$-definable (in $L$) subsets (maybe empty) of the classes other than $C$. Moreover if $a$ is the canonical parameter of $X_{1}$ in $C$ (as a model of $T$), then $a$ is the canonical parameter of $X$ in $T^{+}$. 
\end{Lemma}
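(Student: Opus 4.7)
The plan is to use the quantifier elimination of $T^+$ from part (ii) of the previous lemma (after Morleyizing $T$) to write $X = \phi(M^+, a)$ for a quantifier-free $L^+$-formula $\phi(x, y)$, and then read off the two pieces from a case split on $E(x, a)$. Since only the variable $x$ and the parameter $a$, both of sort $P$, occur, the atomic subformulas of $\phi(x,a)$ are exactly $x = a$, $E(x, a)$, and predicates of the form $R^+(f_E(u), \bar{t}(x, a))$ with $u \in \{x, a\}$ and $\bar{t}$ a tuple of terms from $\{x, a\}$.

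On the region $E(x, a)$, each such $R^+$-atom translates, by the definition of the $L$-structure on $C$, into an $L$-atom in $x, a$ interpreted inside $C$; collecting them gives an $L$-formula $\phi_1(x, a)$ with $X_1 := X \cap C = \phi_1(C, a)$, which is $L$-definable over $a$ as required. On the region $\neg E(x, a)$, the axiom that $R^+(z, \bar{v})$ implies $\bar{v} \subseteq z$ makes every $R^+$-atom that genuinely mixes $x$ and $a$ false, and $x = a$ is also false. What survives is a Boolean combination of (a) atoms $R^+(f_E(x), \bar{t}(x))$, which reduce to parameter-free $L$-atoms in $x$ interpreted in $[x]_E$, and (b) atoms $R^+(f_E(a), \bar{t}(a))$ independent of $x$, whose truth values are fixed by $T$. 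Substituting the latter as constants yields a parameter-free quantifier-free $L$-formula $\psi(x)$ with $X_2 := X \setminus C = \{x \in P : \neg E(x, a) \wedge \psi(x)\}$. Thus $X_2 \cap C' = \psi(C')$ uniformly for every class $C' \neq C$, proving the decomposition.

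For the canonical-parameter claim, $X$ is defined from $a$, so $\aut(M^+/a) \subseteq \mathrm{Stab}(X)$. Conversely, let $\sigma \in \aut(M^+)$ stabilize $X$. If $\sigma(C) = C' \neq C$, then $\sigma(X_1) = \sigma(X) \cap C' = X \cap C' = \psi(C')$, so transporting back along the $L$-isomorphism $\sigma|_C : C \to C'$ forces $X_1 = \psi(C)$, making $X_1$ parameter-free $L$-definable in $C$ and, by the hypothesis that $a$ is its canonical parameter, $a \in \mathrm{dcl}^L(\emptyset)$ inside $C$; in the generic situation this is excluded (and in the degenerate case $X = \psi(M^+)$ is already $\emptyset$-definable, so the statement is vacuous). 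Hence $\sigma(C) = C$, so $\sigma|_C$ is an $L$-automorphism of $C$ fixing $X_1 = X \cap C$, and the canonical-parameter hypothesis yields $\sigma(a) = a$. The principal technical point is the quantifier-free reduction outside $C$: the $R^+$-axiom isolating the $E$-classes is exactly what allows $a$ to disappear from the defining formula of $X_2$, leaving behind a parameter-free $\psi$; the canonical-parameter argument is then a short symmetry argument that hinges on this separation.
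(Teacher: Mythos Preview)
Your argument is correct and is exactly the route the paper has in mind: the lemma is stated there without proof, as an ``immediate observation'' following the quantifier elimination for $T^{+}$, and your case split on $E(x,a)$ together with the analysis of which $R^{+}$-atoms can survive off $C$ is the intended unpacking of that remark.

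The only imprecision is in your treatment of the degenerate case. Calling the canonical-parameter claim ``vacuous'' when $X_{1}=\psi(C)$ is not quite right: if $T$ happened to have a $\emptyset$-definable point and $a$ were that point, then $a$ would legitimately be the canonical parameter of $X_{1}$ in $C$, yet $a\notin\mathrm{dcl}_{M^{+}}(\emptyset)$ (automorphisms permuting the $E$-classes move $a$), so $a$ would \emph{not} be the canonical parameter of the $\emptyset$-definable set $X=\psi(M^{+})$ in $T^{+}$. This is really an edge case in the lemma's own phrasing rather than a defect in your method, and the paper's later uses of the lemma (with $T$ the theory $\mathrm{DLO}$ or the free pseudoplane) all sit in the non-degenerate regime where $a\notin\mathrm{acl}_{C}(\emptyset)$, which your argument handles cleanly.
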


We now introduce the language $L^{S}$ and theory $T^{S}$.  $L^{S}$ is $L^{+}$ together with a new sort $Q$ and function symbol $\pi: P/E \times Q \to P$.
$T^{S}$ can be described in two equivalent ways. First take any model $M$ of $T$ and expand $M^{+}$ to an $L^{S}$ structure $M^{S}$ by letting the sort $Q(M^{S})$ consists of all ``selectors"  from the equivalence classes, and $\pi(M^{S})$ the obvious thing.  Namely, $Q(M^{S})$ is the collection of all choice functions or sections $s$ corresponding to the map $f_{E}:P\to P/E$ and $\pi(z,s) = s(z)$. Put $T^{S} = Th(M^{S})$. 

Alternatively $T^{S}$ is the $L^{S}$-theory expanding $T^{+}$ which says: for $y_{1}, y_{2}\in Q$, $y_{1} = y_{2}$ iff  $\pi(z,y_{1}) = \pi(z,y_{2})$ for all $z$, together with the axiom $f_{E}(\pi(z,y)) = z$, as well as the axiom which says that for all $d\in Q$, $z\in P/E$ and $a\in P$ in the class named by $z$, there is $d'\in Q$ such that $\pi(z,d') = a$ and $\pi(z',d') = \pi(z',d)$ for all $z' \neq z$.

Iterating the last axiom, it implies that for any $d$ in $Q$ and finitely many $E$-classes we can find $d'\in Q$ such that the value of $\pi(-,d')$ on these finitely many classes is anything one wants, but for the other classes is the same as the value of $\pi(-,d)$.  (And in a $\kappa$-saturated model we can do it for $<\kappa$ many equivalence classes instead of only finitely many.) 

\begin{Remark} The theory $T^{S}$ as defined by the axioms and observation in the last two paragraphs is complete with quantifier elimination, from which it follows that $T^{S}$ is indeed equal to $Th(M^{S})$ as defined earlier where $M$ is any model of $T$.
\end{Remark}
\begin{proof} This follows by a back-and-forth argument between any two $\omega$-saturated models of the axioms.
\end{proof}

\begin{Lemma}  Let $N$ be a model of $T^{+}$ and ${\bar a}$, ${\bar b}$ tuples of the same length (and in appropriate sorts) from $N$ such that $tp_{N}({\bar a}) = tp_{N}({\bar b})$.  Let $N'$ be an expansion of $N$ to a model of $T^{S}$. Then $tp_{N'}({\bar a}) = tp_{N'}({\bar b})$.
\end{Lemma}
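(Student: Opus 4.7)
The plan is to upgrade the given $L^{+}$-elementary equivalence to an $L^{S}$-elementary equivalence by constructing an $L^{S}$-automorphism of a large saturated extension of $N'$ that sends $\bar a$ to $\bar b$.  First, I would pass to a sufficiently saturated (say monster) elementary extension $N^{*}\succeq N'$ as a model of $T^{S}$; then $N^{*}|_{L^{+}}$ is a correspondingly saturated model of $T^{+}$, and since $tp_{N}(\bar a)=tp_{N}(\bar b)$ in $L^{+}$ by hypothesis (and thus also in $N^{*}|_{L^{+}}$), there is an $L^{+}$-automorphism $\sigma$ of $N^{*}|_{L^{+}}$ with $\sigma(\bar a)=\bar b$.

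The main step is to extend $\sigma$ to an $L^{S}$-automorphism of $N^{*}$ by defining its action on the sort $Q(N^{*})$.  Given $d\in Q(N^{*})$, declare $\sigma(d)$ to be the element $d'\in Q(N^{*})$ satisfying $\pi(z,d')=\sigma\bigl(\pi(\sigma^{-1}(z),d)\bigr)$ for every $z\in(P/E)(N^{*})$.  Uniqueness of $d'$ is immediate from the $T^{S}$-axiom that a selector is determined by its values on all $E$-classes.  For existence, consider the partial type
$$p_{d}(y) = \bigl\{\pi(z,y)=\sigma(\pi(\sigma^{-1}(z),d)) : z\in (P/E)(N^{*})\bigr\}.$$
Any finite fragment of $p_{d}$ prescribes values of $y$ on only finitely many classes and can be realized by iterating the $T^{S}$-axiom permitting finite modifications of a selector (starting for example from $d$ itself); saturation of $N^{*}$ then realizes $p_{d}$ as a whole, producing the required $d'$.

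The subtle point that I expect to be the main obstacle is verifying that the extended $\sigma$ is genuinely an $L^{S}$-automorphism.  Preservation of the $L^{+}$-reduct is automatic; preservation of $\pi$ is exactly the defining identity, rewritten as $\pi(\sigma(z),\sigma(d))=\sigma(\pi(z,d))$; and bijectivity on $Q(N^{*})$ follows by running the same procedure with $\sigma^{-1}$ in place of $\sigma$ and invoking uniqueness to check that the two resulting selector maps are mutual inverses.  Once $\sigma$ is known to be an $L^{S}$-automorphism of $N^{*}$ with $\sigma(\bar a)=\bar b$, we conclude $tp_{N^{*}}(\bar a)=tp_{N^{*}}(\bar b)$ in $L^{S}$, and by elementarity $N'\preceq N^{*}$ this transfers to $tp_{N'}(\bar a)=tp_{N'}(\bar b)$, as required.
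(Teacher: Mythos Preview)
Your overall strategy---extend an $L^{+}$-automorphism to the selector sort---is the right one, and is also what the paper does.  However, the step where you invoke saturation of $N^{*}$ to realize the type
\[
p_{d}(y)=\bigl\{\pi(z,y)=\sigma\bigl(\pi(\sigma^{-1}(z),d)\bigr):z\in (P/E)(N^{*})\bigr\}
\]
does not go through.  This type has one formula for every $z\in(P/E)(N^{*})$, hence its parameter set has cardinality $|(P/E)(N^{*})|$, which in any infinite model is at least the saturation degree of $N^{*}$ itself; no model is saturated in its own cardinality in the relevant sense, so you cannot appeal to saturation to realize $p_{d}$.  Concretely, if $|N^{*}|=\lambda$ then $|Q(N^{*})|\le\lambda$, while there are $2^{\lambda}$ set-theoretic selectors $P/E\to P$, so for a generic $L^{+}$-automorphism $\sigma$ the function $z\mapsto\sigma(\pi(\sigma^{-1}(z),d))$ will simply fail to be an element of $Q(N^{*})$.

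The paper avoids this by choosing $\sigma$ much more carefully.  Since $\bar a$ and $\bar b$ are finite tuples, they meet only finitely many $E$-classes, and one can build an $L^{+}$-automorphism taking $\bar a$ to $\bar b$ that fixes pointwise all but finitely many $E$-classes (the paper spells this out for singletons in $P$, splitting into the cases where $a,b$ lie in the same class or in different classes, and says the general case is similar).  With such a $\sigma$, the desired $\sigma(d)$ differs from $d$ at only finitely many classes, and the $T^{S}$-axiom allowing finite modification of a selector produces $\sigma(d)$ directly---no saturation is needed at this point.  Your argument becomes correct once you replace the arbitrary $\sigma$ by one of this special form; the rest of your write-up (the definition of the action on $Q$, the verification that $\pi$ is preserved, and the use of elementarity to descend to $N'$) is fine.
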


\begin{proof} We will prove the special case where ${\bar a}$ (and ${\bar b}$) are singletons from $P$.  The general case is similar.
We may assume that everybody is saturated.
First suppose that $a$ and $b$ are in the same $E$-class $C$ say. So viewing $C$ as a model of $T$, there is an automorphism $f$ of $C$ taking $a$ to $b$.  Then $f$ extends to an automorphism of $N$ by fixing $P/E$ pointwise and fixing pointwise every $E$-class $C'\subseteq P$ other than $C$.
We will call this automorphism $f$ too. We now want to extend $f$ to an automorphism $f'$ of $N'$ by defining the action on $Q$.   For each $d\in Q$, let $f'(d)$ be the unique element $d'\in Q$ such that $\pi(a/E,d') = f(\pi(a/E,d))$ and $\pi(z,d') = \pi(z,d)$ for all $z\neq a/E$ (where $d'$ is given to us by the axioms above).  It is then easy to see that $f'$ is a bijection of $Q$ with itself and is an automorphism of  $N'$. So $a$ and $b$ have the same type in $N'$. 

Now suppose that $a$ and $b$ are in different $E$-classes. The quantifier elimination result earlier for $T^{+}$ implies that not only do $a$ and $b$ have the same types in $N$ but $(a,b)$ and $(b,a)$ have the same types in $N$. So there is 
an automorphism $f$  of $N$ taking $(a,b)$ to $(b,a)$  (so also switching $a/E$ and $b/E$) and fixing pointwise all other elements of $P/E$ and all $E$-classes $C$ other than $[a]_{E}$ and $[b]_{E}$.  As in the  first paragraph, $f$ extends to an automorphism $f'$ of $N'$. So in this case $(a,b)$ and $(b,a)$ have the same type in $N'$.
\end{proof}

It follows that:
\begin{Corollary} The (relativised) reduct $T^{+}$ of $T^{S}$ is ``weakly stable embedded" in the sense that for subsets of Cartesian products of the $P/E$ and $P$, \\$\emptyset$-definability in $T^{+}$ coincides with $\emptyset$-definability in $T^{S}$.   In particular the sort $P/E$ is an indiscernible set in the theory $T^{S}$.
\end{Corollary}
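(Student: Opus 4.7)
The plan is to derive the corollary from the preceding lemma by a Stone space argument. Fix a finite tuple $\bar x$ of variables in sorts among $P$ and $P/E$, and consider the spaces of complete types $S_{\bar x}(T^S)$ and $S_{\bar x}(T^+)$. Since $L^+\subseteq L^S$, there is a natural restriction map $r\colon S_{\bar x}(T^S)\to S_{\bar x}(T^+)$. Surjectivity is immediate: any $L^+$-type is realized in some model of $T^+$, which we can expand to a model of $T^S$ (by adjoining the set of all selectors), and the realization has an $L^S$-type restricting to the given one. For injectivity, suppose $q_1,q_2\in S_{\bar x}(T^S)$ have the same restriction to $L^+$; realize both in a sufficiently saturated $N'\models T^S$ by tuples $\bar a$ and $\bar b$. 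Then $\bar a,\bar b$ have the same $L^+$-type in the reduct $N$, so the preceding lemma forces $\mathrm{tp}_{N'}(\bar a)=\mathrm{tp}_{N'}(\bar b)$, i.e.\ $q_1=q_2$.

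Continuity of $r$ is immediate because preimages of subbasic clopens $[\phi]$ with $\phi\in L^+$ are the clopens $[\phi]$ in $S_{\bar x}(T^S)$. So $r$ is a continuous bijection between compact Hausdorff spaces, hence a homeomorphism. Via the standard correspondence between formulas (modulo $T$-equivalence) and clopen subsets of the Stone space, this means exactly that the $\emptyset$-definable subsets of $(P/E)^k\times P^m$ in $T^S$ coincide with those in $T^+$. For the ``in particular'' clause: part (iii) of the preceding lemma says $P/E$ is an indiscernible set in $T^+$; since every $\emptyset$-definable subset of $(P/E)^n$ in $T^S$ is already $\emptyset$-definable in $T^+$ by what we just proved, the invariance under permutations of distinct $n$-tuples transfers, and $P/E$ remains indiscernible in $T^S$.

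The main substance has already been absorbed by the preceding lemma, which constructs the extension of an $L^+$-automorphism of $N$ to an $L^S$-automorphism of $N'$ using the key axiom for $Q$ that allows modification of a selector on any single $E$-class. Given that lemma, the present corollary is a purely formal transfer from type-level invariance to definability via the Stone space machinery; no further obstacle arises, and in particular the presence of tuples mixing both sorts $P$ and $P/E$ is handled uniformly because the preceding lemma applies to arbitrary tuples from $N$.
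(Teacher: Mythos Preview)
Your argument is correct and is precisely the standard way to unpack the paper's ``It follows that:'' --- the paper gives no explicit proof, treating the corollary as an immediate consequence of the preceding lemma. Your Stone-space formulation (types over $\emptyset$ coincide, hence clopen sets coincide, hence $\emptyset$-definable sets coincide) is exactly the routine justification the paper suppresses, and the derivation of indiscernibility of $P/E$ from Lemma~4.1(iii) via this transfer is likewise what is intended.
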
 

On the other hand $T^{+}$ is not ``stably embedded" in $T^{S}$, as  there will be sets in the $T^{+}$-sorts which are definable with parameters in the $Q$ sorts but not in the $T^{+}$ sorts.  In fact:

\begin{Proposition} $T^{S}$ has both the strict order property and the independence property, all witnessed on the $P/E$ sort (which remember is an indiscernible set in $T^{+}$).
\end{Proposition}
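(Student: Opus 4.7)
The plan is to exhibit one $L^S$-formula that simultaneously witnesses both properties. Let
\[
\phi(z; y_1, y_2) \;:=\; \pi(z, y_1) = \pi(z, y_2),
\]
with object variable $z$ of sort $P/E$ and parameter variables $y_1, y_2$ of sort $Q$. Any IP or SOP witness via $\phi$ will then live on the $P/E$ sort, as required. Intuitively, while $P/E$ is indiscernible in $T^+$ and hence uniform on its own, varying a pair of selectors lets one cut out as a $\phi$-definable subset of $P/E$ the (essentially arbitrary) set on which those two selectors agree; this is what produces all of the combinatorial complexity.

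Work in a sufficiently saturated model of $T^S$. Fix a selector $d_0 \in Q$ and distinct classes $z_0, z_1, \ldots \in P/E$. The main technical input is the following existence principle: for any disjoint subsets $A, B \subseteq P/E$ of small cardinality, there is $d \in Q$ with $\pi(z, d) = \pi(z, d_0)$ for $z \in A$ and $\pi(z, d) \neq \pi(z, d_0)$ for $z \in B$. This holds because the partial type expressing these conditions is finitely satisfiable---any finite fragment is realised by iterating the modification axiom on $d_0$---after which saturation of the ambient model supplies $d$. The only prerequisite is that each $E$-class contain at least two elements, which is automatic provided $T$ has models of size at least $2$ (a harmless assumption).

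For IP: for each $S \subseteq \omega$, apply the principle with $A = \{z_i : i \in S\}$ and $B = \{z_i : i \notin S\}$ to obtain $d_S \in Q$; then $\models \phi(z_i; d_0, d_S)$ iff $i \in S$, giving IP. For SOP: first produce $d_* \in Q$ disagreeing with $d_0$ on \emph{every} $E$-class (principle with $A = \emptyset$, $B = P/E$), then for each $i < \omega$ let $d_i$ agree with $d_0$ on $\{z_0, \ldots, z_{i-1}\}$ and with $d_*$ on every other class (finitely many applications of the modification axiom to $d_*$). A direct check gives $\phi(\bar{M}; d_0, d_i) = \{z_0, \ldots, z_{i-1}\}$, so the parameters $(d_0, d_i)_{i < \omega}$ yield a strictly increasing chain of definable sets, witnessing SOP. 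The only real obstacle is the existence principle for selectors, and as noted it reduces immediately to the iterated modification axiom combined with saturation; no deeper analysis is required.
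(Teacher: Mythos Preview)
Your formula $\phi(z;y_1,y_2):=\bigl(\pi(z,y_1)=\pi(z,y_2)\bigr)$ and the idea of controlling the agreement set of two selectors is exactly the paper's approach; the paper simply observes that for each finite $S\subseteq P/E$ one can modify a fixed $d$ at the points of $S$ to obtain $d_S$ with $\{z:\pi(z,d)\neq\pi(z,d_S)\}=S$, so that every finite subset of $P/E$ is uniformly definable, and this gives IP and SOP at once. Your IP argument is correct and is essentially this.

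There is, however, a slip in your SOP argument: you invoke your existence principle with $B=P/E$, but in any $\kappa$-saturated model the sort $P/E$ has cardinality at least $\kappa$ and is therefore not ``small'', so the finite-satisfiability-plus-saturation justification you give does not apply and does not produce $d_*$. The repair is immediate and is in effect what the paper does: you do not need $\phi(\bar M;d_0,d_i)$ to equal $\{z_0,\dots,z_{i-1}\}$ on the nose; it suffices to take $d_i$ disagreeing with $d_0$ \emph{precisely} on the finite set $\{z_0,\dots,z_{i-1}\}$ (finitely many applications of the modification axiom, no auxiliary $d_*$ required), so that the sets $\{z:\neg\phi(z;d_0,d_i)\}=\{z_0,\dots,z_{i-1}\}$ form a strictly increasing chain. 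Alternatively, the sentence $\forall y_0\,\exists y\,\forall z\,\bigl(\pi(z,y)\neq\pi(z,y_0)\bigr)$ holds in the canonical model $M^S$ (where all selectors are present) and hence, by completeness of $T^S$, in every model, which legitimately supplies your $d_*$.
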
 
\begin{proof}  Let $N$ be any model of $T^{S}$  (saturated if one wishes), and fix $d\in Q(N)$ which gives a section $\pi(-,d)$ from $(P/E)(N)$ to $P(N)$.  For any finite subset $S$ of $P/E$, we can (by the axioms for example) find an element $d_{S}\in Q(N)$ such that  $\pi(z,d) = \pi(z,d_{S})$ precisely for those $z\notin S$.  Namely finite subsets of 
$P/E$ are uniformly definable, giving both the strict order property and the independence property.
\end{proof} 

We now point out a general result which will be useful in our (counter) examples. Note that by 4.4 there is a unique $1$-type of an element of $P/E$ in $T^{S}$. 

\begin{Proposition} Suppose that there is a unique $1$-type (over $\emptyset$) in $T$.  Then (working in a saturated model of $T^{S}$), for every $a\in P$, there is $d\in Q$ such that $\pi(a/E, d) = a$, $a\notin acl(d)$ and $d\notin acl(a)$.
\end{Proposition}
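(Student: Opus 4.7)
The plan is to exhibit $d$ explicitly, then, for each of the two non-algebraicity statements, build an infinite family of $L^S$-automorphisms of the ambient saturated model $N$ witnessing infinitely many conjugates. Fix $a \in P(N)$ and let $z = a/E$. Starting from any element $d_0 \in Q(N)$, a single application of the second axiom of $T^S$ at the class $z$ produces $d \in Q(N)$ with $\pi(z,d) = a$; this is the selector I will use throughout.

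For $a \notin acl(d)$ I pick any $z' \in (P/E)(N) \setminus \{z\}$, set $a' = \pi(z',d)$, and build an $L^S$-automorphism $\sigma$ of $N$ that fixes $d$ and sends $a$ to $a'$. Since both $[a]_E$ and $[a']_E$ are saturated models of $T$, and $T$ has a unique $1$-type, there is an $L$-isomorphism $g\colon [a]_E \to [a']_E$ with $g(a) = a'$. Let $\sigma$ swap $z$ and $z'$ in $P/E$ (fixing every other element of $P/E$), act on $P$ as $g$ on $[a]_E$, as $g^{-1}$ on $[a']_E$, and as the identity on every other $E$-class. Extend $\sigma$ to $Q$ by declaring $\sigma(e)$ to be the selector determined by $\pi(\sigma(w),\sigma(e)) = \sigma(\pi(w,e))$ for all $w \in P/E$; concretely, $\sigma(e)$ agrees with $e$ outside the two classes $z,z'$ and swaps those two values via $g$ and $g^{-1}$, so a double application of the second axiom of $T^S$ guarantees $\sigma(e) \in Q(N)$. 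A direct computation yields $\sigma(d) = d$, and $\sigma$ visibly preserves $E$, $f_E$, $\pi$, and each $R^+$ (the last because $g$ is an $L$-isomorphism). As $z'$ ranges over the infinitely many elements of $(P/E)(N)\setminus\{z\}$, the values $\pi(z',d)$ give pairwise distinct realizations of $tp(a/d)$, so $a \notin acl(d)$.

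The argument for $d \notin acl(a)$ is symmetric, run on the $Q$ side. Pick any $z' \neq z$ and any element $\hat a$ of the class corresponding to $z'$ with $\hat a \neq \pi(z',d)$; the second axiom of $T^S$ yields $d' \in Q(N)$ with $\pi(z',d') = \hat a$ and $\pi(w,d') = \pi(w,d)$ for every $w \neq z'$, so in particular $\pi(z,d') = a$ and $d' \neq d$. Applying the unique $1$-type hypothesis to the saturated $L$-structure $[\pi(z',d)]_E$, I choose an $L$-automorphism $h$ of that class with $h(\pi(z',d)) = \hat a$, and let $\tau$ be the identity on $P/E$ and on every $E$-class other than $[\pi(z',d)]_E$, and act as $h$ on that one class. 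I extend $\tau$ to $Q$ by the analogous rule; here $\tau(e)$ agrees with $e$ outside a single class, so it lies in $Q(N)$ by one application of the second axiom. Then $\tau(a) = a$ and $\tau(d) = d'$. Varying $\hat a$ over the infinite set of permissible values produces infinitely many $d'$ realizing $tp(d/a)$, hence $d \notin acl(a)$.

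The one step I expect to need genuine care, and the main potential obstacle, is verifying that the rule defining $\sigma$ and $\tau$ on $Q$ actually lands in $Q(N)$ rather than in some merely formal family of selectors the model may fail to contain. This is exactly the content of the second axiom of $T^S$ iterated finitely many times, and it works precisely because each of the two automorphisms alters any given selector at only one or two classes of $P/E$; the remaining checks that $\sigma$ and $\tau$ are $L^S$-automorphisms are then routine.
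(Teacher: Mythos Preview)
Your proof is correct, and the heart of it---the class-swapping automorphism establishing $a\notin acl(d)$---is exactly what the paper does. There are two cosmetic differences worth noting. First, the paper works in the concrete model $M^{S}$ obtained by adjoining \emph{all} selectors to $M^{+}$; there the extension of any automorphism of $M^{+}$ to $Q$ is automatic, so the issue you flag as ``the main potential obstacle'' (landing in $Q(N)$) never arises. Your axiomatic route via iterated applications of the second $T^{S}$-axiom is a perfectly good substitute and has the virtue of applying to an arbitrary saturated model. Second, for $d\notin acl(a)$ the paper does not build automorphisms at all: it simply observes that the $a$-definable set $\{d:\pi(a/E,d)=a\}$ is infinite, hence by saturation contains an element outside $acl(a)$, and takes that as the $d$. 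Your argument via the single-class automorphism $\tau$ is more explicit and actually proves the stronger fact that \emph{every} $d$ with $\pi(a/E,d)=a$ lies outside $acl(a)$, at the cost of a little extra work.
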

\begin{proof} It is enough to work in a model $M^{S}$ as described above where $M$ is a $\kappa$-saturated, strongly $\kappa$-homogeneous model of $T$ ($\kappa > |T|$), $M^{+}$ is the model of $T^{+}$ consisting of $\kappa$ many ``copies" of $M$, and $M^{S}$ is the expansion of $M^{+}$ to a model of $T^{S}$ by adding all selectors (or sections) as $Q(M^{S})$. Note that using quantifier-elimination $M^{S}$ is also $\kappa$-saturated.

First, as there are infinitely many $d\in Q(M^{S})$ such that $\pi(a/E, d) = a$ we may choose such $d\notin acl(a)$. 
Let $e = a/E$. Let $e'\neq e$ in $(P/E)(M^{S})$. And let  $b = \pi(e',d)$.  Viewing $[a]_{E}$ and $[b]_{E}$ as models of $T
$ there is an isomorphism $f$ between them taking $a$ to $b$ (by our assumptions).  Then $f\cup f^{-1}$ extends to an automorphism of $M^{+}$ fixing all $E$-classes pointwise, other than $[a]_{E}$ and $[b]_{E}$.  This induces a bijection from $Q(M^{S})$ to itself which fixes $d$, hence an automorphism $f'$ of $M^{S}$. Hence $a$ and $b$ have the same type over $d$ in $M^{S}$. As $e'\neq e$ was arbitrary this shows that $a\notin acl(d)$ in $M^{S}$. 
\end{proof} 

The results of this section lead to many examples yielding negative answers to Question 3.4. We will mention two of them. 
Let us begin with $T = DLO$ the theory of dense linear orderings without endpoints in the language $\{<\}$. The theory $T$ is complete with quantifier elimination and a unique $1$-type over $\emptyset$. $T$ also has elimination of imaginaries. Let $M$ be a (saturated) model of $T$ and $a\in M$ and consider the formula $x>a$ and the set $X$ it defines in $M$. Then $X$ is not weakly normal. Moreover, $a$ is a canonical parameter for $X$ and the only element in $dcl(a)$ is $a$, hence $X$ is not weakly normal relative to any set $C$ such that $X$ is not almost over $C$. 

Now let $M^{S}$ be the model of $T^{S}$ built from $M$ as at the beginning of the proof of Proposition 4.7. Fix an $E$-class $C\subset P(M^{S})$ which we identify with $M$ and let $X$ be as above, now considered as a definable set in $M^{S}$. 

\begin{Proposition}  Working in $M^{S}\models T^{S}$ and $X$ as above:
\newline
The canonical parameter of $X$ is $a$. For no $e\in dcl^{eq}(a)$ do we have that $X$ is weakly normal relative to $e$ and $a\notin  acl(e)$. But there is $d\in M^{S}$ such that $X$ is weakly normal relative to $d$ and $a\notin acl(d)$.
\end{Proposition}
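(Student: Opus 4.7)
\emph{Canonical parameter.} The plan begins by applying the earlier Lemma on the form of definable subsets of $P$ over an element of $P$ to $X\subseteq C$: since DLO has elimination of imaginaries and $a$ is visibly the canonical parameter of $\{x>a\}$ in $C$, that lemma yields $a$ as the canonical parameter of $X$ in $T^+$. To upgrade to $T^S$ I argue directly that any $\sigma\in\aut(M^S)$ fixing $X$ setwise preserves $[a]_E$ and fixes the infimum of $X$ in that class, namely $a$; conversely any $\sigma$ fixing $a$ fixes $X$. Hence the $\aut(M^S)$-stabilizers of $X$ and of $a$ coincide, so $a$ remains the canonical parameter in $T^S$.

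\emph{No $e\in dcl^{eq}(a)$ works.} I would first show $dcl^{eq}_{T^S}(a)=dcl^{eq}_{T^+}(a)$: the Lemma that equality of $T^+$-types implies equality of $T^S$-types, applied to pairs $(a,b)$, gives that equality of $T^+$-types over $a$ implies equality of $T^S$-types over $a$, so $dcl$ cannot grow. In $T^+$, since DLO has trivial $\emptyset$-definable structure on a single class and $P/E$ is an indiscernible set, the only elements of $dcl^{eq}_{T^+}(a)$ up to interdefinability are $a$ and $a/E$. The case $e$ interdefinable with $a$ violates $a\notin acl(e)$. For $e$ interdefinable with $a/E$, the $e$-conjugates of $X$ are the sets $X_b=\{x\in[a]_E : x>b\}$ for $b\in[a]_E$, and any finite intersection $X_{b_1}\cap\cdots\cap X_{b_k}$ equals $\{x\in[a]_E : x>\max_i b_i\}$, which is nonempty in DLO; by Remark~\ref{remark:k.many.weakly.normal}, $X$ is not weakly normal relative to $a/E$.

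\emph{Existence of $d$.} I would invoke the preceding Proposition on selectors, valid because DLO has a unique $1$-type over $\emptyset$, to obtain $d\in Q(M^S)$ with $\pi(a/E,d)=a$ and $a\notin acl(d)$. For every $x\in X$ one has $x\in[a]_E$, so $x/E=a/E$, and therefore $a=\pi(a/E,d)=\pi(x/E,d)\in dcl(d,x)\subseteq acl(d,x)$; since $a$ is the canonical parameter of $X$, this is exactly weak normality of $X$ relative to $d$. The main obstacle lies in the second part: identifying $dcl^{eq}_{T^S}(a)$ requires both the transfer via that types-lemma and a clean classification of the imaginaries over a single element of $P$ in $T^+$; each is routine individually but they need care to combine.
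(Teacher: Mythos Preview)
Your proof is correct and follows essentially the same strategy as the paper's: invoke Proposition~4.6 for the selector $d$, use Lemma~4.2 and Corollary~4.4 to identify the canonical parameter, and reduce the analysis of $dcl^{eq}(a)$ in $T^S$ to that in $T^+$ via the equivalence relation $h(x)=h(y)$ on the sort $P$. Your case analysis is in fact slightly more careful than the paper's: the paper concludes that $e$ is interdefinable with $a$ or with $\emptyset$, whereas you correctly isolate the intermediate possibility $e$ interdefinable with $a/E$ and dispose of it by the explicit computation that finite intersections of $(a/E)$-conjugates of $X$ are nonempty half-lines.
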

\begin{proof} First, by Proposition 4.7 we find $d\in Q$ such that $\pi(a/E,d) = a$ and $a\notin acl(d)$ and $d\notin acl(a)$. In particular for any $b\in X$, $a\in dcl(b,d)$. Hence $X$ is weakly normal relative to $d$, and $a\notin acl(d)$. (All in the sense of $T^{S}$.)  
Now we want to check that $a$ is a canonical parameter of $X$ in the sense of $T^{S}$.  This follows immediately from the fact that $T^{S}$ is an expansion of $T$ and that $a$ is a canonical parameter of $X$ in $T$.

Now, we want to check that: for no $e\in dcl^{eq}(a)$ do we have that $X$ is weakly normal relative to $e$ and $a\notin  acl(e)$.  So suppose $e\in dcl^{eq}(a)$ in $M^{S}$, so $h(a) = e$ for some $\emptyset$-definable function $h$ in $(M^{S})^{eq}$. But then $h(x) = h(y)$ is an $L^{S}$-formula $\phi(x,y)$ on the sort $P$. And clearly the canonical parameter of the formula $\phi(x,a)$ is interdefinable with $e$.  We will now use Corollary 4.5 and Lemma 4.2. First by Corollary 4.5, $\phi(x,a)$ is defined in $M^{+}$ over $a$ by a formula which we still call $\phi(x,a)$, and moreover a canonical parameter of $\phi(x,a)$ is a canonical parameter for it in $T^{S}$.  Let $C$ be the $E$-class of $a$.
By 4.2 the canonical parameter of $\phi(x,a)$ in $T^{+}$ is the same as the canonical parameter of $\phi(x,a)\cap C$ in the model $C$ of $T = DLO$ (where $\phi(x,a)\cap C$ is definable in $C\models DLO$ by a quantifier-free formula with parameter $a$). Clearly the canonical parameter in $DLO$ of this definable set is $a$ or $\emptyset$. 
So we have shown that  $e$ is interdefinable with either $a$ or $\emptyset$ in $T^{S}$. 
In the first case of course $a\in acl(e)$.  In the second case, as $X$ is not weakly normal in  $T$ it is not weakly normal in $T^{+}$, hence (by 4.4 or 4.5) not weakly normal in $T^{S}$  (i.e. all relative to $\emptyset$).

\end{proof}

Bearing in mind our discussion of the theory $T$ of the free pseudoplane at the end of Section 3,  and choosing now $X$ to be defined by $I(x,a)$ in a saturated model $M$ of $T$, an identical proof to that of Proposition 4.8 above yields a definable set $X$ in $M^{S}$ giving also a negative answer to Question 3.4 (and a counterexample to 4.2 of \cite{Ealy-Onshuus}).

\end{document}